\numberwithin{equation}{section}
\newtheorem{theorem}{Theorem}
\newtheorem{lemma}[theorem]{Lemma}
\newtheorem{corollary}[theorem]{Corollary}
\newtheorem{remark}[theorem]{Remark}
\newtheorem{question}[theorem]{Question}
\def\cE{\mathcal{E}}
\def\P{\mathbb{P}}
\newcommand{\E}{{\mathbb{E}}}
\newcommand{ \mix}{ t_{\mathrm{mix}} }
\newcommand{ \cL}{ \mathcal L }
\newcommand{\la}{\lambda}
\DeclareMathSymbol{\leqslant}{\mathalpha}{AMSa}{"36} 
\DeclareMathSymbol{\geqslant}{\mathalpha}{AMSa}{"3E} 
\DeclareMathSymbol{\eset}{\mathalpha}{AMSb}{"3F}     
\renewcommand{\le}{\;\leqslant\;}                   
\renewcommand{\ge}{\;\geqslant\;}                   
\renewcommand{\epsilon}{\varepsilon}
\newcommand{\N}{\mathbb N}
\newcommand{\R}{\mathbb R}
\newcommand{\Z}{\mathbb Z}
\begin{document}

\title{A version of Aldous' spectral-gap conjecture for the zero range process}
\author{Jonathan Hermon
\thanks{
University of Cambridge, Cambridge, UK. E-mail: {\tt jonathan.hermon@statslab.cam.ac.uk}. Financial support by
the EPSRC grant EP/L018896/1.}
\and Justin Salez
\thanks{ Universit\'e Paris Diderot, 75205 Paris CEDEX 13, France. E-mail: {\tt justin.salez@lpsm.paris}.} 
}
\date{}
\maketitle

\begin{abstract}
We show that the spectral gap of a general zero range process can be controlled in terms of the spectral gap for a single particle. This is in the spirit of Aldous' famous spectral-gap conjecture for the interchange process, now resolved by Caputo et al. Our main inequality decouples the role of the geometry (defined by the jump matrix) from that of the kinetics (specified by the exit rates). Among other consequences, the various spectral gap estimates that were so far only available on the complete graph or the $d$-dimensional torus now extend effortlessly to arbitrary geometries. As an illustration, we determine the exact order of magnitude of the spectral gap of the rate-one zero-range process on any regular graph and, more generally, for any doubly stochastic jump matrix.

\end{abstract}

\paragraph*{\bf Keywords:}
{\small Comparison, Dirichlet form, spectral gap, mixing-time, zero range process, particle system, expanders.
}
\section{Introduction}
Introduced by Spitzer in 1970 \cite{Spitzer}, the zero-range process (\textbf{ZRP}) is one of the most widely studied models of interacting particles \cite{liggettbook2,liggettbook1}. It describes the evolution of a fixed number $m\ge 1$ of indistinguishable particles diffusing on a finite set $V$ of sites. The state space is
\begin{eqnarray}
\Omega & := & \left\{\eta\in\Z_+^V\colon \sum_{x\in V}\eta(x)=m\right\},
\end{eqnarray}
and the dynamics are specified by the following two ingredients:
\begin{itemize}
\item[$\bullet$] a collection of positive numbers ${\mathbf r}=\left(r(x,k)\colon x\in V, k\in \N\right)$ defining the \emph{kinetics}:  $r(x,k)$ is the rate at which particles are expelled from $x$ if $x$ is occupied by $k$ particles;
\item[$\bullet$]an irreducible stochastic matrix $P$ on $V$ defining the \emph{geometry}:  $P(x,y)$ indicates the probability that a particle expelled from $x$ goes to $y$. 
\end{itemize} 
 Formally, the Markov generator $\cL$ of the \textbf{ZRP} acts on an observable $f\colon \Omega\to\R$ as follows:
\begin{eqnarray}
\label{e:gen}
(\cL f)(\eta) & = & \sum_{(x,y)\in V^2}r(x,\eta(x))P(x,y)\left(f(\eta+\delta_y-\delta_x)-f(\eta)\right),
\end{eqnarray}
where $(\delta_x)_{x\in V}$ denotes the canonical basis of $\R^V$, and with the convention that $r(x,0)=0$ for all $x\in V$. This generator is clearly irreducible, and the stationary law $\mu$ can be checked to admit the following explicit product form:
\begin{eqnarray}
\label{statio}
\mu(\eta) & = & \frac{1}{Z}\prod_{x\in V}\prod_{k=1}^{\eta(x)}\frac{\pi(x)}{r(x,k)},
\end{eqnarray}
where $\pi$ is the invariant law of $P$, and $Z$ a normalization factor. The basic theory ensures that the system will converge to its stationary law, and the present paper is concerned with the general problem of quantifying the rate at which this convergence occurs. 

To answer this question, let us recall that the Dirichlet form associated with our process is
\begin{eqnarray}
\label{def:dir}
\cE_{\textsc{zrp}(P,{\bf r},m)}\left(f,g\right) & := & -\langle f,\cL g\rangle_\mu,
\end{eqnarray}
where $\langle f,g\rangle_\mu:=\sum_{\eta\in\Omega}\mu(\eta)f(\eta)g(\eta)$ denotes the usual inner-product in $\ell^2(\Omega,\mu)$. Recall also that the associated \emph{Poincar\'e constant}  is defined by
\begin{eqnarray}
\label{e:1.5}
\lambda\left(\textsc{zrp}(P,{\bf r},m)\right) & := & \min\left\{\frac{\cE_{\textsc{zrp}(P,{\bf r},m)}\left(f,f\right)}{\textrm{Var}_\mu(f)}\right\},
\end{eqnarray}
where the minimum runs over all non-constant observables, and $\textrm{Var}_\mu(f)$ denotes the variance of $f$ under the stationary law. The question of estimating $\lambda\left(\textsc{zrp}(P,{\bf r},m)\right)$ has received quite some attention. The most standard setting is that where $P$ is the transition matrix of simple random walk on some regular graph $G$, with the following two choices of kinetics:
\begin{enumerate}
\item the rate-one case, where $r(x,k)=1$ for all $x\in V$ and all $k \ge 1$; 
\item the case of homogeneous Lipschitz rates increasing at infinity, see (\ref{lip1})-(\ref{lip2}) below.
\end{enumerate}
In the first case, the exact order of magnitude of the Poincar\'e constant was determined by Morris \cite{MorrisZRP} on the complete graph $G=K_n$ and the torus $G=(\Z/n\Z)^d$. In the second case, a uniform lower-bound  was established by  Landim,  Sethuraman and Varadhan on the torus \cite{MR1415232} and by Caputo on the complete graph \cite{MR2073330}.
The total absence of results for other geometries might seem surprising, especially compared to the cases of the exclusion and interchange processes \cite{Caputo,Olive,MR3069380}. The fundamental reason is the lack of an analogue of Aldous' famous spectral-gap conjecture -- now resolved by Caputo, Liggett and Richthammer \cite{Caputo} -- to reduce the understanding of the whole system to that of a single particle.

In this note, we provide a version of this missing `many-to-one' reduction by exhibiting a simple connection between the Dirichlet form  $\cE_{\textsc{zrp}(P,{\bf r},m)}$ of the zero-range process and the Dirichlet form $\cE_P$ of its jump matrix $P$, defined on $\ell^2(V,\pi)$ by
\begin{eqnarray}
\cE_{P}(\phi,\psi) & := & \langle \phi,(I-P)\psi\rangle_\pi,
\end{eqnarray}
with $\langle \phi,\psi\rangle_\pi:=\sum_{x\in V}\pi(x)\phi(x)\psi(x)$. Among other consequences, we transfer all the results mentioned above to arbitrary regular graphs and, more generally, to \emph{any} doubly stochastic jump matrix $P$, at the optimal cost of a multiplication by the Poincar\'e constant of $P$:
\begin{eqnarray}
\lambda(P) & := & \min\left\{\frac{\cE_P\left(\phi,\phi\right)}{\textrm{Var}_\pi(\phi)}\right\}.
\end{eqnarray}
In particular, we explicitly determine the exact order of magnitude of the Poincar\'e constant of the rate-one $\textbf{ZRP}$ on any regular graph.

\begin{remark}[Reversibility]
If the jump matrix $P$ satisfies the reversibility property
\begin{eqnarray}
\label{reversible}
\forall (x,y)\in V^2,\qquad \pi(x)P(x,y) & = & P(y,x)\pi(x),
\end{eqnarray}
then both $P$ and $\cL$ are self-adjoint operators on $\ell^2(V,\pi)$ and $\ell^2(\Omega,\mu)$ respectively. Consequently,  the Poincar\'e constants $\lambda(P)$ and $\lambda\left(\textsc{zrp}(P,{\bf r},m)\right)$ coincide with the more classical spectral gap, that is, the smallest non-zero eigenvalue of the positive semi-definite operators $I-P$ and $-\cL$ respectively. However, we emphasize that none of our results require (\ref{reversible}).
\end{remark}

\section{Results}

One of the most powerful techniques to analyze a complicated Markov chain consists in comparing its Dirichlet form with that of a better understood chain having the same state space and stationary distribution   \cite{comparison}. In the case of the \textbf{ZRP}, we will show that this comparison can be performed directly at the  level of the jump matrix $P$, without any loss. 

More precisely, let $Q$ be another stochastic matrix with stationary law $\pi$ on $V$, and consider the new \textbf{ZRP} obtained by replacing the jump matrix $P$ with $Q$, while \emph{keeping the rates and the number of particles unchanged}. This modification preserves both the state space $\Omega$ and the stationary law $\mu$, and is therefore eligible for an application of the comparison method. Specifically, we seek an explicit constant $\kappa>0$ (as large as possible) such that 
\begin{eqnarray}
\label{comp:zrp}
\forall f\in \R^\Omega, \qquad \cE_{\textsc{zrp}(P,{\bf r},m)}(f,f) & \ge & \kappa\, {\cE}_{\textsc{zrp}(Q,{\bf r},m)}(f,f).
\end{eqnarray}
This may not seem easy to achieve at all, given the complexity of the particle system. Yet, our main result asserts that
(\ref{comp:zrp}) is in fact \emph{precisely} equivalent to the simpler inequality
\begin{eqnarray}
\label{comp:jump}
\forall \phi\in \R^V, \qquad \cE_{P}(\phi,\phi) & \ge & \kappa\, {\cE}_{Q}(\phi,\phi).
\end{eqnarray}
Thus, comparing two \textbf{ZRP} boils down to comparing their jump matrices.

\begin{theorem}\label{thm:main}The inequality (\ref{comp:zrp}) holds if and only if  (\ref{comp:jump}) holds. In other words,
\begin{eqnarray}
\label{main}
\min_f\left\{\frac{\cE_{\textsc{zrp}(P,{\bf r},m)}(f,f)}{\cE_{\textsc{zrp}(Q,{\bf r},m)}(f,f)}\right\} & = & \min_\phi\left\{\frac{\cE_{P}(\phi,\phi)}{\cE_{Q}(\phi,\phi)}\right\},
\end{eqnarray}
where the $\min$ on the left (resp.\ right) is over all non-constant functions on $\Omega$ (resp.\ $V$).
\end{theorem}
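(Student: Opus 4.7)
The strategy is to derive a single decomposition formula that writes the many-body Dirichlet form as a weighted average of single-particle Dirichlet forms of $P$ on $V$; both implications of the theorem then fall out almost immediately. Write $\Omega_k:=\{\eta\in\Z_+^V:\sum_x\eta(x)=k\}$ and denote by $\mu_k$, $Z_k$ the measure \eqref{statio} on $\Omega_k$ and its normalizing constant. I would begin from the symmetric representation
\begin{equation*}
\cE_{\textsc{zrp}(P,{\bf r},m)}(f,f)=\frac{1}{2}\sum_{\eta\in\Omega_m}\mu_m(\eta)\sum_{(x,y)\in V^2}r(x,\eta(x))P(x,y)\bigl(f(\eta+\delta_y-\delta_x)-f(\eta)\bigr)^2,
\end{equation*}
which is valid by stationarity for any Markov generator and does not require reversibility. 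The outer double sum over $(\eta,x)$ with $\eta(x)\ge 1$ is then reparameterized by $(\eta',x)$ via $\eta':=\eta-\delta_x\in\Omega_{m-1}$. A one-line inspection of the product formula \eqref{statio} yields the crucial algebraic cancellation
\begin{equation*}
\mu_m(\eta)\,r(x,\eta(x))\;=\;\frac{Z_{m-1}}{Z_m}\,\pi(x)\,\mu_{m-1}(\eta-\delta_x),
\end{equation*}
and, setting $\phi_{\eta'}(x):=f(\eta'+\delta_x)$, the inner sum over $(x,y)$ collapses into $2\cE_P(\phi_{\eta'},\phi_{\eta'})$. Assembling these ingredients delivers the target identity
\begin{equation*}
\cE_{\textsc{zrp}(P,{\bf r},m)}(f,f)\;=\;\frac{Z_{m-1}}{Z_m}\sum_{\eta'\in\Omega_{m-1}}\mu_{m-1}(\eta')\,\cE_P(\phi_{\eta'},\phi_{\eta'}),
\end{equation*}
together with an identical formula for $Q$ carrying the \emph{same} prefactor, since $Z_{m-1}/Z_m$ depends only on $\pi$ and ${\bf r}$, which $P$ and $Q$ share.

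From this decomposition both directions are one step away. For $\eqref{comp:jump}\Rightarrow\eqref{comp:zrp}$, I would apply \eqref{comp:jump} to each slice $\phi_{\eta'}\in\R^V$ and then integrate termwise against $\mu_{m-1}$. For the converse $\eqref{comp:zrp}\Rightarrow\eqref{comp:jump}$, I would test \eqref{comp:zrp} on the family of linear observables $f_\phi(\eta):=\sum_{x\in V}\eta(x)\phi(x)$, $\phi\in\R^V$. For such $f_\phi$ one immediately computes $\phi_{\eta'}(x)=\phi(x)+\sum_z\eta'(z)\phi(z)$, i.e.\ $\phi_{\eta'}$ differs from $\phi$ by an additive constant in $x$; constants drop out of any Dirichlet form, so the decomposition collapses to $\cE_{\textsc{zrp}(P,{\bf r},m)}(f_\phi,f_\phi)=(Z_{m-1}/Z_m)\,\cE_P(\phi,\phi)$ and likewise for $Q$. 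Consequently \eqref{comp:zrp} restricted to these special test functions is literally \eqref{comp:jump}, which simultaneously confirms the second equality in \eqref{main} and demonstrates that the linear observables already saturate the minimum on the left-hand side.

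The whole argument hinges on two ingredients: the symmetric-form representation of $\cE_{\textsc{zrp}(P,{\bf r},m)}$ without reversibility, and the one-line combinatorial identity $\mu_m(\eta)r(x,\eta(x))=(Z_{m-1}/Z_m)\pi(x)\mu_{m-1}(\eta-\delta_x)$. The first is standard, and the second is a direct consequence of the product structure \eqref{statio} combined with the convention $r(x,0)=0$ that conveniently kills the boundary terms $\eta(x)=0$. I therefore do not anticipate any substantive obstacle; the only place requiring a little care is making sure the change of variables $(\eta,x)\mapsto(\eta-\delta_x,x)$ is carried out cleanly so that the transition from the symmetric representation of $\cE_{\textsc{zrp}(P,{\bf r},m)}$ to the symmetric representation of $\cE_P$ on each slice proceeds without losing or double-counting any pair.
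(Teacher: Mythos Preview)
Your proposal is correct and follows essentially the same route as the paper. The paper's Lemma~\ref{lm:main} is exactly your decomposition identity (the paper writes the weight as the unnormalized $\mu(\xi)$ on $\widehat\Omega$ rather than $(Z_{m-1}/Z_m)\mu_{m-1}(\eta')$, but these are identical), and the paper's Lemma~\ref{lift} is precisely your linear-observable test $f_\phi(\eta)=\sum_x\eta(x)\phi(x)$ used to extract the converse; the only cosmetic difference is that the paper derives the decomposition from the bilinear form $-\langle f,\cL g\rangle_\mu$ directly instead of passing through the symmetric square representation.
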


We emphasize that the result is valid for any number of particles and any choice of the underlying rates, as long as the same are used in both processes. A particularly interesting choice for the matrix $Q$ is $\Pi$, the matrix with all rows equal to $\pi$:
\begin{eqnarray}
\Pi(x,y) & := & \pi(y).
\end{eqnarray}
In this case, we are comparing our \textbf{ZRP} to its \emph{mean-field} version, where all jump destinations are sampled afresh from the stationary law $\pi$. In particular, we have
\begin{eqnarray}
\cE_{\Pi}(\phi,\phi) & = & \frac 12 \sum_{x,y}\pi(x)\pi(y)(\phi(x)-\phi(y))^2 \ = \textrm{Var}_\pi(\phi)
\end{eqnarray}
so the right-hand side of (\ref{main}) is  nothing but the Poincar\'e constant $\lambda(P)$ of the transition matrix $P$. We thus obtain the following inequality, which completely \emph{decouples} the contribution of the geometry from that of the kinetics:
\begin{corollary}[Comparison to the Mean-Field version]\label{co:mf} For all observables $f\colon\Omega\to\R$, 
\begin{eqnarray}
\label{comp:mf}
{\cE_{\textsc{zrp}(P,{\bf r},m)}(f,f)} & \ge & \lambda(P)\,\cE_{\textsc{zrp}(\Pi,{\bf r},m)}(f,f).
\end{eqnarray}
Moreover, the constant $\lambda(P)$ is optimal, i.e.\ there exists a non-constant observable $f\colon\Omega\to\R$ such that the equality holds in (\ref{comp:mf}).
\end{corollary}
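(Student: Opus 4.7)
The plan is simply to specialize Theorem \ref{thm:main} to the choice $Q = \Pi$, the rank-one matrix with $\Pi(x,y) := \pi(y)$. Since $\pi$ is the stationary law of $\Pi$, this matrix is admissible; replacing $P$ by $\Pi$ while keeping the rates $\mathbf{r}$ and the number of particles $m$ fixed preserves both the state space $\Omega$ and the stationary law $\mu$ given by (\ref{statio}), so Theorem \ref{thm:main} applies verbatim.

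The first (and essentially only) computation is to identify the reference Dirichlet form $\cE_\Pi$: using $\Pi(x,y)=\pi(y)$,
$$
\cE_\Pi(\phi,\phi) \;=\; \langle \phi,(I-\Pi)\phi\rangle_\pi \;=\; \tfrac{1}{2}\sum_{x,y}\pi(x)\pi(y)\bigl(\phi(x)-\phi(y)\bigr)^2 \;=\; \mathrm{Var}_\pi(\phi),
$$
as already displayed in the excerpt. Feeding this into the right-hand side of (\ref{main}) turns it into $\min_\phi \cE_P(\phi,\phi)/\mathrm{Var}_\pi(\phi)$, which by definition is the Poincar\'e constant $\lambda(P)$. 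Theorem \ref{thm:main} then yields
$$
\min_{f \text{ non-constant}} \frac{\cE_{\textsc{zrp}(P,{\bf r},m)}(f,f)}{\cE_{\textsc{zrp}(\Pi,{\bf r},m)}(f,f)} \;=\; \lambda(P),
$$
and this equality (of the left-hand side infimum with the constant $\lambda(P)$) is precisely the inequality (\ref{comp:mf}) asserted in the corollary.

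For the optimality claim, one need only observe that both extrema in (\ref{main}) are genuine minima, not merely infima: the Rayleigh-type quotients are continuous and scale/shift invariant on the finite-dimensional spaces $\R^\Omega$ and $\R^V$, so their values on the (compact) unit spheres of mean-zero functions attain their extrema. Any minimizer $f^\star$ on the left side of (\ref{main}) realizes the equality in (\ref{comp:mf}). The main obstacle is not located in this corollary at all; the entire nontrivial content is Theorem \ref{thm:main}, and once that is granted the corollary requires only the trivial computation above together with an invocation of compactness.
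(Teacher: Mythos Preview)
Your proof is correct and follows exactly the route taken in the paper: specialize Theorem~\ref{thm:main} to $Q=\Pi$, use the identity $\cE_\Pi(\phi,\phi)=\mathrm{Var}_\pi(\phi)$ to recognize the right-hand minimum of (\ref{main}) as $\lambda(P)$, and read off both the inequality and its sharpness from the equality of the two minima. The only cosmetic difference is that, for the optimality clause, the paper implicitly relies on the explicit lift of Lemma~\ref{lift} (which produces a concrete $f$ achieving equality from any Poincar\'e minimizer $\phi$), whereas you invoke finite-dimensional compactness to assert that the left-hand minimum in (\ref{main}) is attained; both arguments are valid and equally short.
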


If $P$ is the transition matrix of simple random walk on a regular $n$-vertex graph, or more generally, if $P$ is a doubly stochastic $n\times n$ matrix, then $\Pi$ is simply the  matrix with all entries equal to $\frac 1n$,  which we will denote by $K_n$. There are many results available for $\textsc{zrp}(K_n,{\mathbf{r}},m)$, making our comparison quite fruitful. For example, a uniform lower-bound on the Poincar\'e constant was established in \cite{MR2200172} when the rates are uniformly increasing:
\begin{eqnarray}
\lambda\left({\textsc{zrp}(K_n,{\bf r},m)}\right) & \ge & \inf_{x,k}\left\{r(x,k+1)-r(x,k)\right\}.
\end{eqnarray}
 By virtue of Corollary \ref{co:mf}, we immediately obtain the following considerable extension.
\begin{corollary}[Uniformly increasing rates]For any doubly stochastic matrix $P$,
\begin{eqnarray}
\lambda\left(\textsc{zrp}(P,{\bf r},m)\right) & \ge & \lambda(P)\,\inf_{x,k}\left\{r(x,k+1)-r(x,k)\right\}.
\end{eqnarray}
\end{corollary}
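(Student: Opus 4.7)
The plan is to deduce this immediately from Corollary \ref{co:mf} combined with the mean-field bound recalled just above from \cite{MR2200172}. The crucial observation is that when $P$ is a doubly stochastic matrix on an $n$-vertex set $V$, its invariant law $\pi$ is uniform; therefore the mean-field jump matrix $\Pi$ defined by $\Pi(x,y)=\pi(y)$ coincides exactly with $K_n$. Moreover, the stationary measure $\mu$ of the $\textbf{ZRP}$ given by (\ref{statio}) depends only on $\pi$ and the rates $\mathbf{r}$, not on the jump matrix, so replacing $P$ with $\Pi=K_n$ leaves both $\Omega$ and $\mu$ (hence $\textrm{Var}_\mu(f)$) unchanged.

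With this identification, the proof is a two-line chaining. First, for any non-constant $f\colon\Omega\to\R$, Corollary \ref{co:mf} gives
\begin{equation*}
\frac{\cE_{\textsc{zrp}(P,\mathbf{r},m)}(f,f)}{\textrm{Var}_\mu(f)} \;\ge\; \lambda(P)\,\frac{\cE_{\textsc{zrp}(K_n,\mathbf{r},m)}(f,f)}{\textrm{Var}_\mu(f)}.
\end{equation*}
Taking the infimum over all non-constant $f$ and using (\ref{e:1.5}) yields the factorized bound
\begin{equation*}
\lambda\bigl(\textsc{zrp}(P,\mathbf{r},m)\bigr) \;\ge\; \lambda(P)\,\lambda\bigl(\textsc{zrp}(K_n,\mathbf{r},m)\bigr).
\end{equation*}
Second, one plugs in the complete-graph bound $\lambda(\textsc{zrp}(K_n,\mathbf{r},m))\ge \inf_{x,k}\{r(x,k+1)-r(x,k)\}$ from \cite{MR2200172} to conclude.

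I do not anticipate any real obstacle here: the deep work — the decoupling of geometry and kinetics — has already been done in Theorem~\ref{thm:main} and Corollary~\ref{co:mf}, and the complete-graph estimate is imported as a black box. The only thing to verify, which is essentially notational, is the identification $\Pi=K_n$ under the doubly-stochastic hypothesis, together with the invariance of $\mu$ and $\textrm{Var}_\mu$ under the substitution $P\leadsto\Pi$.
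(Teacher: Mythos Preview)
Your proposal is correct and follows exactly the route the paper takes: the corollary is stated immediately after recalling the mean-field bound from \cite{MR2200172} and is introduced with ``By virtue of Corollary~\ref{co:mf}, we immediately obtain\ldots''. Your identification $\Pi=K_n$ under the doubly-stochastic assumption and the observation that $\mu$ (hence $\mathrm{Var}_\mu$) is unchanged are precisely the points needed to make the one-line deduction rigorous.
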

This inequality is sharp, as can be seen by considering the case of independent walkers ($r(x,k)=k$ for all $x\in V,k\ge 1$). 

As a second example, let us consider the extensively studied case of homogeneous Lipschitz rates increasing at infinity, as treated in \cite{MR1415232,MR1681098,MR2073330,MR2184099,Cap}. More precisely, suppose that $r(x,k)=r(k)$ for all $x\in V$ and $k\ge 1$, where the function $r\colon \N\to(0,\infty)$ satisfies 
\begin{eqnarray}
\label{lip1}
\sup_{k\ge 1}|r(k+1)-r(k)| & < & \infty,\\
\label{lip2}
\inf_{k-\ell\ge \delta}r(k)-r(\ell) & > & 0,
\end{eqnarray}
for some $\delta\in\N$. Under these conditions, it was shown in \cite{MR2073330} that  
\begin{eqnarray}
\lambda\left(\textsc{zrp}(K_n,{r(\cdot)},m)\right) & \ge & c,
\end{eqnarray}
for some $c>0$ that does not depend on $n,m$. We immediately deduce the following.
\begin{corollary}[Homogeneous Lipschitz rates increasing at infinity]\label{co:lip} Under assumptions (\ref{lip1})-(\ref{lip2}),  for any doubly stochastic matrix $P$,
\begin{eqnarray}
\lambda\left(\textsc{zrp}(P,{r(\cdot)},m)\right) & \ge & c\lambda(P),
\end{eqnarray}
for some constant $c>0$ which depends neither on $P$, nor  on $m$.
\end{corollary}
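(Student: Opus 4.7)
The plan is to derive Corollary \ref{co:lip} as an immediate consequence of Corollary \ref{co:mf} combined with Caputo's bound on the mean-field ZRP \cite{MR2073330}. The key observation enabling this is that when $P$ is doubly stochastic on $V$ with $|V|=n$, its invariant law $\pi$ is uniform, so the mean-field matrix $\Pi$ defined by $\Pi(x,y)=\pi(y)$ coincides with $K_n$. Moreover, from the product formula (\ref{statio}) for the stationary measure $\mu$ of the ZRP, it is clear that $\mu$ depends on the jump matrix only through its invariant distribution $\pi$. Consequently, the stationary measures of $\textsc{zrp}(P,r(\cdot),m)$ and $\textsc{zrp}(K_n,r(\cdot),m)$ are identical, and so the variance $\mathrm{Var}_\mu(f)$ that appears in the denominator of the Poincar\'e quotient is the same for both processes.

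Given this, the argument is a one-line application of Corollary \ref{co:mf}. First I would invoke (\ref{comp:mf}) with $\Pi=K_n$ to get
\begin{eqnarray*}
\cE_{\textsc{zrp}(P,r(\cdot),m)}(f,f) & \ge & \lambda(P)\,\cE_{\textsc{zrp}(K_n,r(\cdot),m)}(f,f),
\end{eqnarray*}
for every observable $f\colon\Omega\to\R$. Dividing both sides by $\mathrm{Var}_\mu(f)$, which is legitimate by the previous remark, and then taking the infimum over non-constant $f$, one obtains
\begin{eqnarray*}
\lambda\left(\textsc{zrp}(P,r(\cdot),m)\right) & \ge & \lambda(P)\,\lambda\left(\textsc{zrp}(K_n,r(\cdot),m)\right).
\end{eqnarray*}

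To conclude, I would cite the main result of \cite{MR2073330}: under the assumptions (\ref{lip1})--(\ref{lip2}), there exists a constant $c>0$, depending only on the function $r(\cdot)$ (through (\ref{lip1})--(\ref{lip2})), such that $\lambda(\textsc{zrp}(K_n,r(\cdot),m))\ge c$ for all $n,m$. Substituting this into the previous display yields exactly the desired bound, with the claimed independence from $P$ and $m$.

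There is essentially no obstacle: the technical content has been absorbed into Corollary \ref{co:mf}, and the present corollary is a transfer statement. The only point worth emphasizing in a polished write-up is the observation that $\mu$ is the same for the two processes being compared, so that the Dirichlet-form inequality passes directly to a Poincar\'e-constant inequality without any additional adjustment.
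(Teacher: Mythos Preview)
Your proposal is correct and matches the paper's approach exactly: the paper simply states ``We immediately deduce the following'' after recalling Caputo's bound $\lambda(\textsc{zrp}(K_n,r(\cdot),m))\ge c$, and your write-up is precisely the unpacking of that immediate deduction via Corollary~\ref{co:mf}. The only detail you add beyond the paper's implicit argument is the explicit remark that $\mu$ is the same for both processes, which is indeed the reason the Dirichlet-form comparison passes to Poincar\'e constants.
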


As a final example, consider the rate-one case ${\bf r}={\bf 1}$ (i.e.\ $r(x,k)=1$ for all $x\in V$ and all $k\ge 1$). For this natural and important choice, Morris \cite{MorrisZRP} showed that
\begin{eqnarray}
\lambda\left(\textsc{zrp}(K_n,{\mathbf{1}},m)\right) & \ge & {c}\left(1+ \frac{m}{n} \right)^{-2},
\end{eqnarray}
for some universal constant $c>0$. By virtue of Corollary \ref{co:mf}, this immediately yields a lower-bound on $\lambda\left(\textsc{zrp}(P,{\mathbf{1}},m)\right)$. Below, we will complement it with a matching upper-bound to obtain the following general result, which completely settles the rate-one case.
 \begin{corollary}[Unit rates]\label{co:unit} For any $n\times n$ doubly stochastic matrix $P$, we have
\begin{eqnarray}
\lambda\left(\textsc{zrp}(P,{\mathbf{1}},m)\right) & \asymp & \lambda(P)\left(1+\frac{m}{n}\right)^{-2},
\end{eqnarray}
where $\asymp$ denotes equality up to universal constants.
\end{corollary}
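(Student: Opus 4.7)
My plan is to prove matching lower and upper bounds on $\lambda(\textsc{zrp}(P,\mathbf{1},m))$. The lower bound falls out of the paper's main result: since $P$ is doubly stochastic, its invariant law $\pi$ is uniform, so the mean-field matrix is $\Pi=K_n$, and Corollary~\ref{co:mf} combined with Morris's estimate recalled in the introduction yields
$$\lambda(\textsc{zrp}(P,\mathbf{1},m))\ge \lambda(P)\,\lambda(\textsc{zrp}(K_n,\mathbf{1},m))\ge c\,\lambda(P)(1+m/n)^{-2}.$$

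For the matching upper bound, I would test the Poincar\'e quotient on the linear observable $f(\eta):=\sum_x\phi(x)\eta(x)$, with $\phi\in\R^V$ a minimizer for $\lambda(P)$ normalized so that $\sum_x\phi(x)=0$. The linearity $f(\eta+\delta_y-\delta_x)-f(\eta)=\phi(y)-\phi(x)$, combined with the identity
$$\cE_{\textsc{zrp}(P,\mathbf{1},m)}(f,f)=\frac{1}{2}\sum_{\eta\neq\eta'}\mu(\eta)q(\eta,\eta')(f(\eta')-f(\eta))^2$$
(which holds without reversibility, via a short expansion using the stationarity of $\mu$), reduces the Dirichlet form to $\mu(\eta(x)\ge 1)\cdot\frac{1}{2}\sum_{x,y}P(x,y)(\phi(y)-\phi(x))^2$; the marginal probability is $x$-independent because $\mu$ is uniform on $\Omega$ when the rates equal one (cf.\ (\ref{statio})). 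A stars-and-bars count gives $\mu(\eta(x)\ge 1)=m/(m+n-1)$, and since $P$ is doubly stochastic the remaining sum equals $2n\,\cE_P(\phi,\phi)$, so I would arrive at
$$\cE_{\textsc{zrp}(P,\mathbf{1},m)}(f,f)=\frac{mn}{m+n-1}\,\cE_P(\phi,\phi).$$

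For the variance, exchangeability of $(\eta(x))_{x\in V}$ under $\mu$ together with the conservation law $\sum_x\eta(x)=m$ force $\Cov_\mu(\eta(x),\eta(y))=-\Var_\mu(\eta(x))/(n-1)$ for $x\neq y$, so the normalization $\sum_x\phi(x)=0$ collapses the variance of $f$ to $\Var_\mu(f)=\tfrac{n}{n-1}\Var_\mu(\eta(x))\sum_x\phi(x)^2$. Extracting the one-site marginal variance from the Bose--Einstein distribution by reading moments off the probability generating function $\bbe[z^{\eta(x)}]$ would give $\Var_\mu(\eta(x))=\frac{m(m+n)(n-1)}{n^2(n+1)}$, whence $\Var_\mu(f)=\frac{m(m+n)}{n+1}\Var_\pi(\phi)$. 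Putting the two ingredients together, the Poincar\'e quotient at $f$ equals $\frac{n(n+1)}{(m+n-1)(m+n)}\lambda(P)$, which is of order $\lambda(P)(1+m/n)^{-2}$ and establishes the upper bound.

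The only step with real content is the exact Bose--Einstein variance computation; everything else is either a direct appeal to Corollary~\ref{co:mf} or a routine manipulation of exchangeable marginals and Dirichlet forms, so I do not anticipate any serious obstacle.
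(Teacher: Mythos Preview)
Your proposal is correct and follows essentially the same route as the paper: the lower bound is exactly the combination of Corollary~\ref{co:mf} with Morris's estimate, and the upper bound is obtained by testing the Poincar\'e quotient against the linear observable $f(\eta)=\sum_x\phi(x)\eta(x)$ with $\phi$ a centred minimizer for $\lambda(P)$, using exchangeability for the variance and the Bose--Einstein marginal for $\Var_\mu(\eta(x))$, arriving at the identical ratio $\tfrac{n(n+1)}{(m+n-1)(m+n)}\lambda(P)$. The only cosmetic difference is that the paper packages the Dirichlet-form computation via the lift identity $\cE_{\textsc{zrp}}(f,f)=\mu(\widehat{\Omega})\cE_P(\phi,\phi)$ and first states a general upper bound (Lemma~\ref{ub}) valid for arbitrary homogeneous rates before specializing, whereas you compute directly in the unit-rate case.
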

This applies in particular to the case where $P$ is the transition matrix of simple random walk on a regular graph. To the best of our knowledge, the only  graphs for which the answer was known were the complete graph and the torus  \cite{MorrisZRP}. Interestingly, the work \cite{MorrisZRP} also relies on a (weaker) comparison to the mean-field setting, see Remark \ref{congestion} below.

As a final application, let us turn our attention to the total-variation and $L_{\infty}-$ mixing times, defined respectively by
\begin{eqnarray}
\mix^{(\textsc{tv)}} & := & \inf \left\{t\ge 0\colon \max_{\xi\in\Omega} \sum_{\eta\in\Omega} \left|p_t(\xi,\eta)-\mu(\eta)\right|\le \frac 1e \right\};\\
\mix^{(\infty)} & := & \inf \left\{t\ge 0\colon \max_{(\xi,\eta)\in\Omega^2}\left|\frac{p_t(\xi,\eta)}{\mu(\eta)} - 1\right| \le \frac 1e \right\},
 \end{eqnarray} 
 where $p_t=e^{t\cL}$ denotes the transition kernel of the chain. 
Estimating these fundamental parameters is in general a challenging task, see the books \cite{levin,MR2341319}. To the best of our knowledge, the mixing-time of the rate-one $\mathbf{ZRP}$ has only been determined on the cycle \cite{Lacoin} (via a bijection with the exclusion process, specific to the cycle) and, very recently, on the complete graph \cite{MS}. Both results concern the classical regime where the total density of particles per site remains bounded. In this setting, Corollary \ref{co:unit} is actually powerful enough to provide the exact order of magnitude of the mixing times on all expanders. 
\begin{corollary}[Mixing times]For each $n\ge 1$, consider an $n\times n$ bi-stochastic matrix $P_n$ and an integer $m_n\ge 1$. Assume that $\lambda(P_n) =  \Omega(1)$ and that $m_n = \Theta(n)$ as $n\to\infty$. Then,
\begin{eqnarray}
\mix^{(\textsc{tv)}}\left(P_n,{\mathbf 1},m_n\right) \ = \ \Theta(n) & \textrm{ and } &
\mix^{(\infty)}\left(P_n,{\mathbf 1},m_n\right) \ = \ \Theta(n).
\end{eqnarray}
\end{corollary}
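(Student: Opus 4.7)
The plan is to combine Corollary~\ref{co:unit} with two classical ingredients: the Poincar\'e-based $L^\infty$-mixing estimate for the upper bound, and a test-function/Poisson-domination argument for the lower bound. All the nontrivial content is already packaged in Corollary~\ref{co:unit}, so the remaining steps are essentially routine. Concretely, the hypotheses combined with the trivial bound $\lambda(P_n)\le 2$ (valid for any stochastic matrix) give $\lambda(P_n)=\Theta(1)$, whence Corollary~\ref{co:unit} yields $\lambda(\textsc{zrp}(P_n,\mathbf{1},m_n))=\Theta(1)$. Since $P_n$ is doubly stochastic, $\pi$ is uniform on the $n$-site state space, and rate-one kinetics render $\mu$ uniform on $\Omega_n$ via~(\ref{statio}); hence $\mu_{\min}^{-1}=|\Omega_n|=\binom{n+m_n-1}{m_n}$, and Stirling gives $\log(1/\mu_{\min})=\Theta(n)$ in the regime $m_n=\Theta(n)$.

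For the upper bound, I would invoke the standard inequality $\mix^{(\infty)}\le C\log(1/\mu_{\min})/\lambda$, valid for any irreducible continuous-time chain with Poincar\'e constant $\lambda$ as defined in~(\ref{e:1.5}); see e.g.\ \cite{MR2341319}. This yields $\mix^{(\infty)}\left(P_n,\mathbf{1},m_n\right)=O(n)$, and the trivial comparison $\mix^{(\textsc{tv)}}\le\mix^{(\infty)}$ delivers the matching total-variation upper bound. The only point that demands a little care is that $P_n$ need not be reversible; however, because the Poincar\'e constant used throughout the paper is the Dirichlet-form gap, it coincides with the spectral gap of the additive symmetrization $(\cL+\cL^*)/2$, so the standard $L^2$-decay $\|p_t(\xi,\cdot)/\mu-1\|_{\ell^2(\Omega,\mu)}^{2}\le e^{-2t\lambda}/\mu(\xi)$ still holds, and the Cauchy--Schwarz duality step that converts $L^2$-decay into $L^\infty$-decay proceeds exactly as in the reversible case.

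For the lower bound, pick any site $x_0$ and consider the test function $f(\eta):=\eta(x_0)$. Starting from the concentrated configuration $\xi:=m_n\delta_{x_0}$, we have $f(\xi)=m_n$, whereas $\bbe_\mu f = m_n/n=\Theta(1)$. The scalar process $t\mapsto f(X_t)$ jumps only by $\pm 1$: the outgoing rate at $x_0$ is bounded by $r(x_0,\eta(x_0))\le 1$, and the incoming rate is bounded by $\sum_y P_n(y,x_0)=1$ (using double stochasticity). Hence $m_n-f(X_t)$ is stochastically dominated by a rate-$2$ Poisson process. Choosing $t=cm_n$ with $c>0$ small enough yields $\bbp_\xi(f(X_t)>m_n/2)\ge 2/3$, while Markov's inequality gives $\mu(f>m_n/2)\le 2/n$. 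The total-variation distance at time $t$ therefore exceeds $1/e$ for all large $n$, establishing $\mix^{(\textsc{tv)}}\left(P_n,\mathbf{1},m_n\right)=\Omega(n)$ and hence also $\mix^{(\infty)}\left(P_n,\mathbf{1},m_n\right)=\Omega(n)$. The only real (and mild) obstacle is verifying the non-reversible version of the $L^\infty$-mixing inequality; the rest is straightforward bookkeeping.
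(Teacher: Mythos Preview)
Your proposal is correct and follows essentially the same route as the paper: the upper bound via the Poincar\'e-based $L^\infty$ mixing estimate~(\ref{e:mixrel}) combined with Corollary~\ref{co:unit} and $\log(1/\mu_{\min})=\Theta(n)$, and the lower bound by starting all particles at a single site and noting that departures occur at rate at most $1$, so it takes time $\Omega(m_n)$ for half of them to leave. Your write-up is in fact more detailed than the paper's (which dispatches the lower bound in one sentence), and you additionally address the non-reversibility issue for the $L^\infty$ bound, which the paper glosses over.
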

Indeed, for the upper-bound,  we may exploit the well-known estimate (see, e.g., \cite{MR2341319})
\begin{eqnarray}
\label{e:mixrel}
  \mix^{(\infty)} & \le & \frac{2}{\lambda}  \log \frac{e}{\min_{\eta}\mu(\eta)}.
\end{eqnarray}
Under our assumptions, Corollary \ref{co:unit} guarantees that the right-hand side is $\Theta(n)$. For a matching lower-bound on $\mix^{(\textsc{tv})}$, simply observe that if all particles start on the same site $x$, it will takes time $\Omega(m)$ for, say, half of the particles to even leave $x$.  

We conclude this section with two important remarks, and a question.

\begin{remark}[Congestion]\label{congestion}The support $E=\{(x,y)\in V\times V\colon P(x,y)>0\}$ of the matrix $P$ naturally defines a directed graph on $V$. Suppose that on this graph, an arbitrary path $\gamma_{x\to y}$ is specified between each pair  $(x,y)$ of sites, and define the resulting congestion as
\begin{eqnarray}
\kappa  & := & \max_{(a,b)\in E}\frac{1}{\pi(a)P(a,b)}\sum_{(x,y)\in S^2}\pi(x)\pi(y)\,\mathrm{length}(\gamma_{x\to y})\,{\bf 1}_{(\gamma_{x\to y}\textrm{ traverses }(a,b))}.
\end{eqnarray}
Minimizing this quantity over all possible paths (or more generally: convex combinations of such paths) defines what is known as the \emph{congestion constant} $\kappa(P)$ of the matrix $P$. Its appeal lies in the following inequality, discovered by  Diaconis and Stroock \cite{DiaconisStroock}, who further developed earlier results of Jerrum and Sinclair \cite{JS} along these lines:  
\begin{eqnarray}
\lambda(P) & \ge & \frac{1}{\kappa(P)}.
\end{eqnarray}
Now, when $P$ is the transition matrix of a graph $G$, it is often the case that the congestion constant can be \emph{lifted} to certain particle systems on $G$. This method is due to Diaconis and Saloff-Coste  who first applied it to the exclusion process \cite{comparison}.  For the rate-one $\textbf{ZRP}$ this was observed by Morris \cite{MorrisZRP}, who  obtained a (restricted) version of Corollary \ref{co:mf} with our optimal factor $\lambda(P)$ replaced by the weaker $\frac{1}{\kappa(P)}$. In the case where $G$ is the torus, those two factors are of the same order, yielding the correct order of magnitude for $\lambda\left(\textsc{zrp}(P,\textbf{1},m)\right)$. In general however, the quantities $\lambda(P)$ and $\frac{1}{\kappa(P)}$ will differ significantly, making the sharp estimate provided in Corollary \ref{co:unit} completely out of reach of a congestion-based comparison.
\end{remark}

\begin{remark}[Beyond the Poincar\'e constant] All consequences listed above rely on the Poincar\'e constant only. However, our main result actually provides a control of the Dirichlet form itself, which carries much more information. In particular, Corollary \ref{co:mf} can also be used to transfer any known result on the logarithmic Sobolev constant (such as the estimate obtained, e.g.,  in \cite{MR2184099}), the whole spectrum of $\cL$, or the average $L^2$ distance to equilibrium. We therefore expect to see many other applications of our comparison result in the future.
\end{remark}
As already mentioned, our results bare resemblance to Aldous' spectral-gap conjecture, established by Caputo, Liggett and Richthammer \cite{Caputo}, which reduces the Poincar\'e constant of the interchange process to that of the random walk performed by a single particle. Since our comparison holds at the stronger level of the Dirichlet form itself, we may ask the following question, in relation with a recent work of Alon and Kozma \cite{Alon} (which provides such a comparison result, but with $\la(P)$ replaced with the inverse of the total-variation mixing time corresponding to $P$, and involves another term which is often of order 1). 
\begin{question}
Does an analog of Theorem \ref{thm:main} hold for the exclusion/interchange processes?
More precisely, can one relate the Dirichlet forms of two exclusion/interchange processes (with the same number of sites), via the Dirichlet forms corresponding to a single particle in these processes? Note that this makes sense for arbitrary edge weights.     
\end{question}
As the interchange process is a transitive chain, an affirmative answer would imply (see, e.g.\ \cite[Corollary 8.8]{aldous}) an upper bound on the $L_{\infty}$ mixing time of the interchange process on any $n$-vertex regular graph, in terms of  the inverse  spectral gap of a single particle, times the $L_{\infty}$ mixing time of the interchange process on the complete graph on $n$ vertices (which is known to be of order $\log n$, see \cite{DS}).

\section{Proofs}
\subsection{Main comparison theorem}
We will need to consider configurations with one particle removed, so let us introduce
\begin{eqnarray}
\widehat{\Omega} & := & \left\{\xi\in\Z_+^V\colon \sum_{x\in V}\xi(x)=m-1\right\}.
\end{eqnarray}
We extend the definition of $\mu$ to $\widehat{\Omega}$ by using the same product formula (the right-hand side of (\ref{statio}) makes perfect sense for any configuration $\eta\in\Z_+^V$). Note that we do not modify the normalization constant $Z$, so only the restriction of $\mu$ to $\Omega$ is guaranteed to be a probability measure. In view of the product form of $\mu$, we then have
\begin{eqnarray}
\label{prod}
\mu(\xi+\delta_x)r(x,\xi(x)+1) & = &  \mu(\xi)\pi(x),
\end{eqnarray}
for any $\xi\in\widehat{\Omega}$ and any $x\in V$. Given an observable $f\colon \Omega\to\R$ and a configuration $\xi\in\widehat{\Omega}$, we define a function $f_\xi\colon V\to \R$ by the formula
\begin{eqnarray}
\label{proj}
f_\xi\left(x\right) &:= & f\left(\xi+\delta_x\right).
\end{eqnarray}
With this notation in hands, we may now state our main identity, which provides a simple  connection between the Dirichlet form of the \textbf{ZRP} and that of the jump matrix. 
\begin{lemma}[Main identity]\label{lm:main}For any $f,g\colon \Omega\to\R$, we have
\begin{eqnarray}
\cE_{\textsc{zrp}(P,{\bf r},m)}\left(f,g\right) & = & \sum_{\xi\in\widehat{\Omega}}\mu(\xi)\cE_P\left(f_\xi,g_\xi\right).
\end{eqnarray}
\end{lemma}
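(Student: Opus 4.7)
The plan is to expand the Dirichlet form directly from its definition and show that a simple change of variables, combined with the product form (\ref{prod}) of $\mu$, collapses everything into the claimed sum over configurations with one particle removed.

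First I would write
\begin{eqnarray*}
\cE_{\textsc{zrp}(P,{\bf r},m)}(f,g) \ = \ -\langle f,\cL g\rangle_\mu & = & \sum_{\eta\in\Omega}\mu(\eta)f(\eta)\sum_{(x,y)\in V^2}r(x,\eta(x))P(x,y)\left[g(\eta)-g(\eta+\delta_y-\delta_x)\right].
\end{eqnarray*}
Because $r(x,0)=0$, the inner sum effectively restricts to $x$ with $\eta(x)\ge 1$. Next I would perform the change of variables $\xi:=\eta-\delta_x\in\widehat\Omega$, so that the pair $(\eta,x)$ with $\eta\in\Omega$ and $\eta(x)\ge 1$ is in bijection with the pair $(\xi,x)\in\widehat\Omega\times V$ via $\eta=\xi+\delta_x$. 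This gives
\begin{eqnarray*}
\cE_{\textsc{zrp}(P,{\bf r},m)}(f,g) & = & \sum_{\xi\in\widehat\Omega}\sum_{x\in V}\mu(\xi+\delta_x)\,r(x,\xi(x)+1)\,f_\xi(x)\sum_{y\in V}P(x,y)\left[g_\xi(x)-g_\xi(y)\right],
\end{eqnarray*}
where I have used the notation (\ref{proj}) and the fact that $g(\xi+\delta_y)=g_\xi(y)$.

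The crucial step is now to invoke the identity (\ref{prod}), which replaces $\mu(\xi+\delta_x)\,r(x,\xi(x)+1)$ by $\mu(\xi)\,\pi(x)$. Doing so, the expression factors as
\begin{eqnarray*}
\cE_{\textsc{zrp}(P,{\bf r},m)}(f,g) & = & \sum_{\xi\in\widehat\Omega}\mu(\xi)\sum_{x\in V}\pi(x)f_\xi(x)\sum_{y\in V}P(x,y)\left[g_\xi(x)-g_\xi(y)\right] \\
 & = & \sum_{\xi\in\widehat\Omega}\mu(\xi)\,\langle f_\xi,(I-P)g_\xi\rangle_\pi \ = \ \sum_{\xi\in\widehat\Omega}\mu(\xi)\,\cE_P(f_\xi,g_\xi),
\end{eqnarray*}
which is exactly the desired identity.

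There is no serious obstacle here: the whole argument is a direct computation, and the only genuine input is the product structure (\ref{prod}) of the stationary measure, which is precisely what makes the ZRP's kinetics compatible with the single-particle walk. The one point requiring a little care is verifying that the bijection $(\eta,x)\leftrightarrow(\xi,x)$ respects the support of the summand (so that no spurious terms appear from $\xi(x)=0$, which is handled automatically since $\xi+\delta_x$ always lies in $\Omega$). Once this is checked, the identity drops out without further work.
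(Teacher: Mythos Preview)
Your proposal is correct and follows essentially the same route as the paper's own proof: expand the definition of $\cE_{\textsc{zrp}(P,{\bf r},m)}$, use $r(x,0)=0$ to restrict to $\eta(x)\ge 1$, apply the change of variables $\xi=\eta-\delta_x$, invoke the product identity (\ref{prod}), and recognize the resulting inner sum as $\cE_P(f_\xi,g_\xi)$. The only difference is cosmetic---you spell out the bijection $(\eta,x)\leftrightarrow(\xi,x)$ a bit more explicitly than the paper does.
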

\begin{proof}
Explicitating the definition of $\cE_{\textsc{zrp}(P,{\bf r},m)}$, we have
\begin{eqnarray*}
\cE_{\textsc{zrp}(P,{\bf r},m)}\left(f,g\right) & = & \sum_{\eta\in \Omega}\sum_{(x,y)\in V^2}\mu(\eta)r(x,\eta(x))P(x,y)f(\eta)\left(g(\eta)-g(\eta+\delta_y-\delta_x)\right).
\end{eqnarray*}
Since $r(x,0)=0$ for all $x\in V$, only the triples $(\eta,x,y)$ with $\eta(x)\ge 1$ contribute to this sum. Consequently, we may use the change of variables $\xi=\eta-\delta_x$ to rewrite this as
\begin{eqnarray*}
\cE_{\textsc{zrp}(P,{\bf r},m)}\left(f,g\right) & = & \sum_{\xi\in\widehat{\Omega}}\sum_{(x,y)\in V^2} \mu(\xi+\delta_x)r(x,\xi(x)+1)P(x,y)f_\xi(x)\left(g_\xi(x)-g_\xi(y)\right)\\
& = & \sum_{\xi\in\widehat{\Omega}}\sum_{(x,y)\in V^2}\mu(\xi)\pi(x)P(x,y)f_\xi(x)\left(g_\xi(x)-g_\xi(y)\right)\\
& = & \sum_{\xi\in\widehat{\Omega}}\mu(\xi)\cE_P\left(f_\xi,g_\xi\right),
\end{eqnarray*}
where we have successively used our definition (\ref{proj}), the relation (\ref{prod}), and the definition of the Dirichlet form $\cE_P$. 
\end{proof}
To prove the sharpness of our comparison, we will also need to \emph{lift} observables from $V$ to $\Omega$ in a suitable way.  This is provided by the following elementary lemma.
\begin{lemma}[Useful lift]\label{lift}Let $\phi\colon V\to\R$ be arbitrary, and define an observable $f$ on $\Omega$ by
\begin{eqnarray}
f(\eta) & := & \sum_{x\in V}\phi(x)\eta(x).
\end{eqnarray} 
We then have the identity
\begin{eqnarray}
\cE_{\textsc{zrp}(P,{\bf r},m)}(f,f) & = & \mu(\widehat{\Omega})\,\cE_P\left(\phi,\phi\right).
\end{eqnarray}
\end{lemma}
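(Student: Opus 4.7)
The plan is to directly apply the Main Identity of Lemma~\ref{lm:main} to the chosen lift $f(\eta)=\sum_{x}\phi(x)\eta(x)$. The point is that this lift is \emph{linear} in $\eta$, which should make each projection $f_\xi$ a trivial perturbation of $\phi$.

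Concretely, I would first compute $f_\xi$ for fixed $\xi\in\widehat{\Omega}$. By the defining formula~(\ref{proj}),
\begin{eqnarray*}
f_\xi(x) \;=\; f(\xi+\delta_x) \;=\; \sum_{y\in V}\phi(y)\bigl(\xi(y)+\delta_x(y)\bigr) \;=\; c(\xi)+\phi(x),
\end{eqnarray*}
where $c(\xi):=\sum_{y}\phi(y)\xi(y)$ depends only on $\xi$, not on $x$. Thus $f_\xi$ and $\phi$ differ by an additive constant.

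Next I would observe that the Dirichlet form $\cE_P$ is invariant under the addition of constants: writing
$\cE_P(\psi,\psi)=\tfrac12\sum_{x,y}\pi(x)P(x,y)(\psi(x)-\psi(y))^2+\tfrac12\sum_{x,y}(\pi(x)P(x,y)-\pi(y)P(y,x))\psi(x)\psi(y)$, or more cleanly from $\langle\psi,(I-P)\psi\rangle_\pi$ noting that $(I-P)\mathbf{1}=0$ and that $\pi$ is $P$-invariant, one sees that replacing $\psi$ by $\psi+c$ leaves $\cE_P(\psi,\psi)$ unchanged. In particular,
\begin{eqnarray*}
\cE_P(f_\xi,f_\xi) \;=\; \cE_P(\phi,\phi),
\end{eqnarray*}
for every $\xi\in\widehat{\Omega}$.

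Finally, plugging this into Lemma~\ref{lm:main} gives
\begin{eqnarray*}
\cE_{\textsc{zrp}(P,{\bf r},m)}(f,f) \;=\; \sum_{\xi\in\widehat{\Omega}}\mu(\xi)\,\cE_P(f_\xi,f_\xi) \;=\; \cE_P(\phi,\phi)\sum_{\xi\in\widehat{\Omega}}\mu(\xi) \;=\; \mu(\widehat{\Omega})\,\cE_P(\phi,\phi),
\end{eqnarray*}
as desired. There is no real obstacle here; the only thing to be slightly careful about is that $\mu$ has been extended to $\widehat{\Omega}$ without renormalisation, so $\mu(\widehat{\Omega})$ is the un-normalised mass of the $(m-1)$-particle configurations, which is precisely the factor appearing on the right-hand side.
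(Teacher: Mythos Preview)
Your proof is correct and follows exactly the same route as the paper: compute $f_\xi(x)=c(\xi)+\phi(x)$, use translation-invariance of $\cE_P$ to get $\cE_P(f_\xi,f_\xi)=\cE_P(\phi,\phi)$, and conclude via Lemma~\ref{lm:main}. The only differences are cosmetic (you spell out why constants drop out of $\cE_P$ and flag the meaning of $\mu(\widehat{\Omega})$), so there is nothing to add.
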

\begin{proof}
For any $\xi\in\widehat{\Omega}$ and any $x\in V$, we have by construction
\begin{eqnarray*}
f_\xi(x) & = & f(\xi)+\phi(x),
\end{eqnarray*}
where we have extended the definition of $f$ to $\widehat{\Omega}$ in the obvious way. Since shifting an observable $h$ by a constant does not affect $\cE_P(h,h)$, we deduce that
\begin{eqnarray*}
 \cE_P(f_\xi,f_\xi) & = & \cE_P(\phi,\phi).
 \end{eqnarray*} The conclusion now follows from Lemma \ref{lm:main}. 
\end{proof}
Those two lemmas are already enough to establish our comparison theorem.
\begin{proof}[Proof of Theorem \ref{thm:main}]Fix $\kappa>0$, and suppose first that (\ref{comp:jump}) holds. In particular, for any observable $f\colon \Omega\to\R$ and any configuration $\xi\in\Omega$, we have 
\begin{eqnarray*}
\cE_{P}(f_\xi,f_\xi) & \ge & \kappa\, {\cE}_{Q}(f_\xi,f_\xi).
\end{eqnarray*}
Multiplying by $\mu(\xi)$ and summing over all $\xi\in\widehat{\Omega}$ yields
\begin{eqnarray*}
\sum_{\xi\in\widehat{\Omega}}\mu(\xi)\cE_{P}(f_\xi,f_\xi) & \ge & \kappa\, \sum_{\xi\in\widehat{\Omega}}\mu(\xi){\cE}_{Q}(f_\xi,f_\xi).
\end{eqnarray*}
By  Lemma \ref{lm:main}, this gives precisely (\ref{comp:zrp}). Conversely, let us now suppose that (\ref{comp:jump}) fails. This means that there is a function  $\phi\colon V\to\R$ such that 
\begin{eqnarray*}
\cE_{P}(\phi,\phi) & < & \kappa\, {\cE}_{Q}(\phi,\phi).
\end{eqnarray*}
The construction in Lemma \ref{lift} provides us with an observable $f$ on $\Omega$ such that
\begin{eqnarray*}
\cE_{\textsc{zrp}(P,{\bf r},m)}(f,f) &  = & \mu(\widehat{\Omega})\,\cE_P\left(\phi,\phi\right);\\
\cE_{\textsc{zrp}(Q,{\bf r},m)}(f,f) &  = & \mu(\widehat{\Omega})\,\cE_Q\left(\phi,\phi\right).
\end{eqnarray*}
It follows that $\cE_{\textsc{zrp}(P,{\bf r},m)}(f,f)<\kappa \cE_{\textsc{zrp}(Q,{\bf r},m)}(f,f)$, and (\ref{comp:zrp}) fails as desired.
\end{proof}
All our announced lower-bounds on the Poincar\'e constant follow from this general theorem. To complete the proof of Corollary \ref{co:unit}, we need to provide a matching upper-bound. 
\subsection{Matching upper-bounds on the Poincar\'e constant}

\begin{lemma}[A general upper-bound]\label{ub} Assume that $P$ is doubly stochastic and that ${\mathbf r}$ is homogeneous (i.e.\ $r(x,k)=r(k)$ for some fixed $r\colon \N\to(0,\infty)$). Then,
\begin{eqnarray}
\lambda\left(P,\textbf{r},m\right) & \le & \left(1-\frac 1n\right)\lambda(P)\frac{\E_{\mu}[r(\zeta)]}{\mathrm{Var}_{\mu}(\zeta)},
\end{eqnarray}
where $\zeta$ is the number of particles on an arbitrarily fixed site, under the stationary law $\mu$. 
\end{lemma}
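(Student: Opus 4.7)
The approach is the variational principle: I will exhibit an explicit test function $f\colon\Omega\to\R$ whose Rayleigh quotient attains the right-hand side. The key structural input is that, since $P$ is doubly stochastic, $\pi$ is uniform on $V$, and combined with the homogeneity of $\mathbf{r}$ the product formula (\ref{statio}) makes $\mu$ invariant under all permutations of the sites. This symmetry is what I will exploit throughout.

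First, I will take $\phi\colon V\to\R$ to be a minimizer of the Rayleigh quotient defining $\lambda(P)$, normalized (without loss of generality, since adding a constant is free) so that $\sum_x \phi(x)=0$. Applying the construction of Lemma \ref{lift} to $\phi$ produces the observable $f(\eta):=\sum_x\phi(x)\eta(x)$, and that lemma directly yields
\[
\cE_{\textsc{zrp}(P,\mathbf{r},m)}(f,f) \;=\; \mu(\widehat{\Omega})\,\lambda(P)\,\mathrm{Var}_\pi(\phi),
\]
which already settles the numerator of the Rayleigh quotient.

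Next I will compute $\mathrm{Var}_\mu(f)$. By site-symmetry of $\mu$, every $\mathrm{Var}_\mu(\eta(x))$ equals $\mathrm{Var}_\mu(\zeta)$, and all cross-covariances $\mathrm{Cov}_\mu(\eta(x),\eta(y))$ with $x\ne y$ share one common value. The conservation law $\sum_x \eta(x)=m$ forces this common value to be $-\mathrm{Var}_\mu(\zeta)/(n-1)$. Expanding $\mathrm{Var}_\mu(f)=\sum_{x,y}\phi(x)\phi(y)\,\mathrm{Cov}_\mu(\eta(x),\eta(y))$ and using both $\sum_x\phi(x)=0$ and the identity $\sum_x\phi(x)^2 = n\,\mathrm{Var}_\pi(\phi)$ (again because $\pi$ is uniform) leads, after a short calculation, to
\[
\mathrm{Var}_\mu(f) \;=\; \frac{n^2}{n-1}\,\mathrm{Var}_\mu(\zeta)\,\mathrm{Var}_\pi(\phi).
\]

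Finally I need to evaluate $\mu(\widehat{\Omega})$. Summing the pointwise identity (\ref{prod}) over $(\xi,x)\in\widehat{\Omega}\times V$ and reparametrizing the left-hand side via $\eta:=\xi+\delta_x$ (using the convention $r(\,\cdot\,,0)=0$ to include the terms with $\eta(x)=0$ trivially), I obtain $\mu(\widehat{\Omega}) = \E_\mu\!\left[\sum_x r(\eta(x))\right]$, which by symmetry equals $n\,\E_\mu[r(\zeta)]$. Substituting the three ingredients into $\cE_{\textsc{zrp}(P,\mathbf{r},m)}(f,f)/\mathrm{Var}_\mu(f)$ produces exactly the claimed upper bound. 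There is no serious obstacle here: the argument is essentially a symmetry-driven variance computation combined with a repackaging of the stationary product formula. The only thing to get right is to bring in the centering $\sum_x\phi(x)=0$ and the conservation-induced covariance constraint at the appropriate moments in the variance calculation.
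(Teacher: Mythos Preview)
Your proposal is correct and follows essentially the same route as the paper's own proof: pick a minimizer $\phi$ of $\lambda(P)$, lift it via Lemma~\ref{lift}, compute $\mathrm{Var}_\mu(f)$ using the exchangeability of $\mu$ together with the conservation constraint $\sum_x\eta(x)=m$, and evaluate $\mu(\widehat{\Omega})$ by summing the identity (\ref{prod}). The only cosmetic difference is that the paper normalizes $\phi$ so that $\sum_x\phi(x)^2=n$ (i.e.\ $\mathrm{Var}_\pi(\phi)=1$), whereas you carry the factor $\mathrm{Var}_\pi(\phi)$ through and cancel it at the end.
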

\begin{proof}
Let $n=|V|$. Since $\pi$ is the uniform law and the rates are homogeneous, the law of $\left(\eta(x)\colon x\in V\right)$ is exchangeable under the stationary law $\mu$. In particular, we may write
\begin{eqnarray*}
\mathrm{Cov}\left(\eta(x),\eta(y)\right) & = & \alpha{\bf 1}_{(x=y)}-\beta,
\end{eqnarray*}
for certain $\alpha,\beta\in\R$. Summing over all $(x,y)\in V^2$, we see that
\begin{eqnarray*}
n\alpha - \beta n^2 & = & \textrm{Var}\left(\sum_{x\in V}\eta(x)\right) \ = \ 0,
\end{eqnarray*}
because the total number of particles is $m$. Thus, we actually have $\beta=\frac{\alpha}{n}$ and hence 
\begin{eqnarray*}
\alpha & = & \frac{n}{n-1}\textrm{Var}(\eta(x)),
\end{eqnarray*}
where $x$ is an arbitrary site. Now, let $\phi\colon V\to\R$ realize the minimum in the definition of $\lambda(P)$. Upon  recentering and rescaling, we may actually assume that
\begin{eqnarray*}
\sum_{x\in V}\phi(x) & = & 0\\
\sum_{x\in V}\left(\phi(x)\right)^2 & = & n\\
\cE_P\left(\phi,\phi\right) & = &  \lambda(P).
\end{eqnarray*} 
Applying the construction in Lemma \ref{lift}, we obtain an observable $f$ on $\Omega$ such that
\begin{eqnarray*}
\cE_{\textsc{zrp}(P,{\bf r},m)}(f,f) &  = & \lambda(P)\mu(\widehat{\Omega}).
\end{eqnarray*} 
Moreover, explicitating the definition of $f$, we see that
\begin{eqnarray*}
\textrm{Var}(f) & = & \sum_{(x,y)\in V^2}\phi(x)\phi(y)\mathrm{Cov}\left(\eta(x),\eta(y)\right)\\
& = & \alpha\sum_{x\in V^2}\left(\phi(x)\right)^2-\beta\left(\sum_{x\in V}\phi(x)\right)^2\\
& = & \frac{n^2}{n-1}\textrm{Var}(\eta(x)),
\end{eqnarray*}
Finally, note that by summing (\ref{prod}) over all $(\xi,x)\in\widehat{\Omega}\times V$ and then performing the change of variables $\eta=\xi+\delta_x$, we obtain
\begin{eqnarray*}
\mu(\widehat{\Omega}) 
& = & \sum_{(\eta,x)\in\Omega\times V}\mu(\eta)r(x,\eta(x)),
\end{eqnarray*}
which equals  $n\E[r(\eta(x))]$ by exchangeability. In conclusion, 
\begin{eqnarray*}
\frac{\cE_{\textsc{zrp}(P,{\bf r},m)}(f,f)}{\textrm{Var}(f)} & = & \left(1-\frac{1}{n}\right)\frac{\lambda(P)\E\left[r(\eta(x))\right]}{\textrm{Var}(\eta(x))},
\end{eqnarray*}
which completes the proof, by definition of the Poincar\'e constant $\lambda\left({\textsc{zrp}(P,{\bf r},m)}\right)$.
\end{proof}
Let us now specialize this to the rate-one case to complete the proof of Corollary \ref{co:unit}. 
\begin{proof}[Proof of the upper-bound in Corollary \ref{co:unit}]
The stationary law $\mu$ is simply the uniform law on the set $\Omega$, which has cardinality ${n+m-1\choose m}$. Thus, the law of $\zeta$ is explicitly given by  
\begin{eqnarray*}
\P\left(\zeta=k\right) & = & \frac{{n+m-k-2\choose n-2}}{{n+m-1\choose n-1}} \qquad (0\le k\le m).
\end{eqnarray*}
Let us compute $\rm{Var}(\zeta)$. Using standard binomial manipulations, it is easy to check that
\begin{eqnarray*}
\sum_{k=0}^mk(k-1){n+k-2\choose n-2} & = & n(n-1){n+m-1 \choose n+1}.
\end{eqnarray*}
Dividing through by ${n+m-1\choose n-1}$, we obtain
\begin{eqnarray*}
\E\left[(m-\zeta)(m-\zeta-1)\right]  & = & \frac{n-1}{n+1}m(m-1).
\end{eqnarray*}
On the other hand, without having to do any computation, we know that 
$
\E[\zeta] = \frac{m}{n}
$ by exchangeability of $\mu$. Together, these two facts easily lead to
\begin{eqnarray*}
\textrm{Var}(\zeta) & = & \frac{n-1}{n+1}\left(1+\frac mn\right)\frac{m}{n}.
\end{eqnarray*}
Finally, since $r(k)={\bf 1}_{(k\ge 1)}$, we have
\begin{eqnarray*}
\E[r(\zeta)] & = & \frac{m}{n+m-1}.
\end{eqnarray*}
By Lemma \ref{ub}, we conclude that
\begin{eqnarray*}
\lambda\left(P,\textbf{r},m\right) & \le & \frac{\lambda(P)n(n+1)}{(n+m)(n+m-1)} \ \asymp \ \lambda(P)\left(1+\frac{m}{n}\right)^{-2},
\end{eqnarray*}
as desired.
\end{proof}


\nocite{}
\bibliographystyle{plain}
\bibliography{ZRP}

\vspace{2mm}

\end{document}